\theoremstyle{plain} 
\newtheorem{theorem}{Theorem}[section]
\newtheorem{lemma}[theorem]{Lemma}
\newtheorem{proposition}[theorem]{Proposition}
\theoremstyle{definition} 
\newtheorem{definition}[theorem]{Definition}
\theoremstyle{remark} 
\newtheorem{remark}[theorem]{Remark}
\newcommand{\bbC}{\mathbb{C}}
\newcommand{\bbD}{\mathbb{D}}
\newcommand{\bbN}{\mathbb{N}}
\newcommand{\bfm}{\mathbf{m}}
\newcommand{\mcP}{\mathcal{P}}
\newcommand{\mcU}{\mathcal{U}}
\newcommand{\mcB}{\mathcal{B}}
\begin{document}
\title{Parameterizing degree~$n$ polynomials by multipliers of periodic orbits}
\author{Igors Gorbovickis}
\maketitle

\begin{abstract}
We present the following result: consider the space of complex polynomials of degree $n\ge 3$ with $n-1$ distinct marked periodic orbits of given periods. Then this space is irreducible and the multipliers of the marked periodic orbits considered as algebraic functions on the above mentioned space, are algebraically independent over~$\bbC$. Equivalently, this means that at its generic point, the moduli space of degree $n$ polynomial maps can be locally parameterized by the multipliers of $n-1$ arbitrary distinct periodic orbits. A detailed proof of this result (together with a proof of a more general statement) is given in~\cite{mult_n}. In this exposition we substitute some of the technical lemmas from~\cite{mult_n} with more geometric arguments.
\\[1ex]
\noindent MSC 37F10, 37F05
\end{abstract}

\section{Introduction}
A key point in studying moduli spaces of degree $n$ rational or polynomial maps of the Riemann sphere $\hat{\bbC}$ is the choice of a parameterization. The idea of using the multipliers of the fixed points of a map as the parameters of the moduli space appears naturally in many works on the subject. Notably, in~\cite{Milnor_M2} J.~Milnor used the multipliers of the fixed points to parameterize the moduli space of degree $2$ rational maps. Using this parameterization he proved that this moduli space is isomorphic to $\bbC^2$. In the polynomial case local parameterization by the multipliers of the fixed points was studied, for instance, in~\cite{Sugiyama},~\cite{Zarhin_2}.

In the following discussion let $M$ be the moduli space of either rational or polynomial maps of degree~$n$. 
As a generalization of the approach described above, instead of the multipliers at the fixed points one can try to use the multipliers of periodic orbits as the local parameters on the moduli space $M$. 
It is not hard to see that the map from $M$ to the multipliers of the chosen periodic orbits is defined in a neighborhood of a generic point of $M$. The main difficulty is to show that this map is a local diffeomorphism, when the number of the chosen periodic orbits is equal to the dimension of $M$. Since multipliers are (multiple valued) algebraic maps on $M$, this leads to the question whether there exist ``hidden'' algebraic relations between the multipliers of the chosen periodic orbits. 
In other words, are the chosen multipliers algebraically independent over~$\bbC$, if we view those multipliers as (multiple valued) functions on $M$?

In~\cite{McMullen} McMullen proved that if $n\ge 2$ and $M$ is the moduli space of degree $n$ rational maps then, except for the flexible Latt\`{e}s maps, an element of $M$ is determined up to finitely many choices by the multipliers of \textit{all} of its periodic orbits. This implies that one can always choose $2n-2=\dim(M)$ distinct periodic orbits whose multipliers, considered as (multiple valued) functions on $M$, are algebraically independent over~$\bbC$.

In this paper we focus on the case when $M$ is the moduli space of degree $n$ polynomial maps and we show that the multipliers of any $n-1$ distinct periodic orbits are algebraically independent over~$\bbC$. The best previous result in this direction belongs to Yu.~Zarhin~\cite{Zarhin_1}, who proved the independence of multipliers with a strong restriction on both the \emph{number} and the \emph{periods} of the corresponding periodic orbits.

Finally, let us mention that a wholly different approach to the subject is given by A.~Epstein's Transversality Principles in Holomorphic Dynamics~\cite{Epstein_1}.
The techniques used in this paper are not directly related, and it is an intriguing question whether our results could be obtained using Epstein's methods. 

\textbf{Acknowledgments:} I wish to express my thanks to Adam Epstein and Michael Yampolsky for insightful and stimulating discussions.

\subsection{Statement of the main result}

It is easy to see that the change of coordinates $z\mapsto z+c$ preserves the class of monic polynomials of degree $n$ and does not change multipliers of periodic orbits. We say that two polynomials are equivalent, if they are conjugate with each other by a map of the form $z\mapsto z+c$. The factor space of all monic polynomials of degree $n$ factored by this equivalence relation can be identified with so called centered polynomials.
\begin{definition}
By $\mcP^n\subset\bbC[z]$ we denote the set of centered monic polynomials of degree $n$, which are monic polynomials of degree $n$ with the term of degree $n-1$ being equal to zero.
\end{definition}
Consider a monic centered polynomial of degree $n\ge 3$ and its $n-1$ distinct periodic orbits. 
If these periodic orbits are of multiplicity $1$, then by means of the Implicit Function Theorem their multipliers can be analytically continued to $n-1$ (multiple valued) algebraic functions defined on $\mcP^n$.

We present the proof of the following theorem:
\begin{theorem}\label{main_th}
For $n\ge 3$, the multipliers of any $n-1$ distinct periodic orbits considered as (multiple valued) algebraic functions on $\mcP^n$, are algebraically independent over $\bbC$. In other words, they do not satisfy any polynomial relation with complex coefficients.
\end{theorem}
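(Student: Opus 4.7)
The plan is to show that the multiplier map
\[
\Phi: \mcP^n \to \bbC^{n-1}, \qquad P \mapsto (\rho_1(P), \ldots, \rho_{n-1}(P))
\]
is dominant as a map of algebraic varieties, where $\rho_i$ denotes the (locally defined, multi-valued) algebraic multiplier of the $i$-th marked periodic orbit. Since both $\mcP^n$ and $\bbC^{n-1}$ have dimension $n-1$ and $\mcP^n$ is irreducible (being affine), algebraic independence of $\rho_1,\ldots,\rho_{n-1}$ over $\bbC$ is equivalent to dominance of $\Phi$, and in turn to the non-vanishing of the Jacobian determinant of $\Phi$ at some (hence generic) point $P_0 \in \mcP^n$. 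Thus the entire problem reduces to producing a single polynomial $P_0$ at which the differentials $d\rho_1,\ldots,d\rho_{n-1}$ span the cotangent space $T^*_{P_0}\mcP^n$.

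A preliminary step is to work not on $\mcP^n$ itself but on the ``marked'' incidence variety
\[
\widetilde{\mcP}^n := \{(P, z_1, \ldots, z_{n-1}) \mid P \in \mcP^n,\ \text{each } z_i \text{ lies on an exact period-}k_i\text{ orbit of } P\}
\]
and to verify that $\widetilde{\mcP}^n$ is irreducible, so that each $\rho_i$ is a well-defined algebraic function on a single irreducible cover of $\mcP^n$. I would attempt this by a monodromy argument, showing that the Galois group of the finite cover $\widetilde{\mcP}^n \to \mcP^n$ acts transitively on the sheets through explicit deformations of the coefficients of $P$ that permute the periodic points of the prescribed periods.

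The heart of the proof is then to exhibit a specific $P_0$ at which $d\Phi$ has full rank. I would do this by degeneration: choose a one-parameter family $\{P_t\}$ converging, as $t\to 0$, to a limit polynomial $P_0$ in which the $n-1$ marked orbits can be described explicitly --- for instance, one whose coefficients are large enough that most periodic orbits lie in the region where the B\"ottcher coordinate near $\infty$ is valid, or one that degenerates, after rescaling, into lower-degree dynamical pieces inside each of which some of the required orbits localize. Differentiating the identity $\log \rho_i = \sum_{j=0}^{k_i-1}\log P'(z_{i,j})$ along the deformation, and using $z_{i,j+1}=P(z_{i,j})$ to express the infinitesimal motion $\dot z_{i,j}$ in terms of the deformation vector, then writes each $d\log\rho_i$ as an explicit linear functional on $T_{P_0}\mcP^n$, so that the Jacobian can be analyzed directly.

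The main obstacle will be verifying non-vanishing of this Jacobian \emph{uniformly} for an arbitrary choice of periods $k_1,\ldots,k_{n-1}$. Cycles may interact badly: two orbits whose representatives collide in the limit $P_0$ can contribute parallel logarithmic derivatives, and cycles related by a hidden symmetry of $P_0$ may yield linearly dependent differentials. This is where the geometric arguments that the abstract advertises as replacements for the technical lemmas of~\cite{mult_n} should enter. My strategy would be to arrange the $n-1$ orbits so that they are distributed ``independently'' across the dynamical plane of $P_0$ --- exploiting the freedom to deform $P_0$ along directions that move one orbit while approximately pinning the others --- and then to conclude maximal rank by a transversality argument that adds the orbits one at a time. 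I expect essentially all of the remaining technical work to concentrate here.
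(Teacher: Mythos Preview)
Your overall framework matches the paper's exactly: you correctly identify that algebraic independence is equivalent to the multiplier map being a local diffeomorphism at a single point of the irreducible incidence variety $M^n_\bfm$ (your $\widetilde{\mcP}^n$), and that the irreducibility of this cover is a separate preliminary ingredient. The paper indeed cites Lemma~\ref{Mnkm_lemma} for that irreducibility and then reduces Theorem~\ref{main_th} to the existence of one regular point for $\Lambda$.

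Where you diverge from the paper is in how that single good point is produced. You propose a degeneration of $P$ to a limit where the orbits can be located explicitly, followed by a direct analysis of the Jacobian matrix of $(d\log\rho_1,\dots,d\log\rho_{n-1})$. The paper does something entirely different and avoids any Jacobian computation: it shows that for every period vector $\bfm$ there exists a polynomial in $\mcP^n$ with $n-1$ \emph{attracting} periodic orbits of the prescribed periods (Lemma~\ref{exist_attr_l}), and then invokes Milnor's parameterization of bounded hyperbolic components (Theorem~\ref{attract_th}) to conclude that $\Lambda$ restricts to a bijection from the corresponding hyperbolic component onto $\bbD^{n-1}$. An algebraic bijection between $(n-1)$--dimensional domains is automatically a local diffeomorphism somewhere, so the rank statement follows without ever writing down $d\Lambda$. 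The existence of the required hyperbolic polynomial is obtained inductively: start from the fixed-point case $\bfm=(1,\dots,1)$ (handled via Zarhin's result, Proposition~\ref{fixed_th}), then push one multiplier to a primitive root of unity on the boundary of the hyperbolic component and perturb so that the resulting parabolic point bifurcates into an attracting orbit of the desired higher period, using the Fatou--Shishikura inequality and the maximum principle to force the new multiplier below $1$.

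Your approach is not wrong in principle --- it is essentially the ``computational'' route of~\cite{mult_n} that the present paper is explicitly designed to bypass --- but as you yourself note, the entire difficulty is concentrated in the step you leave open: showing non-vanishing of the Jacobian uniformly over all period vectors. Neither the B\"ottcher-region heuristic nor the ``add orbits one at a time'' transversality sketch comes with a mechanism that survives arbitrary periods $k_1,\dots,k_{n-1}$, and colliding or symmetry-related orbits are a genuine hazard. The paper's hyperbolic-component argument sidesteps this completely: once the polynomial with $n-1$ attracting orbits exists, Milnor's theorem hands you the full-rank statement for free.
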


A detailed proof of Theorem~\ref{main_th} (together with a proof of a slightly more general statement) is given in~\cite{mult_n}, and it relies heavily on the result of a certain computation. The goal of this paper is to give a different proof of Theorem~\ref{main_th}, in which the computational parts of the previous proof are substituted with more geometric arguments based on some ideas from one-dimensional complex dynamics.

\section{Auxiliary results and the proof of Theorem~\ref{main_th}}
We start by recalling some definitions and useful fact established in~\cite{mult_n}.

\begin{definition}
We say that a periodic orbit of a polynomial $p$ is of period $m$, if this periodic orbit consists of $m$ \emph{distinct} points. A point of period $m$ is a point that belongs to a periodic orbit of period $m$.
\end{definition}

Consider a polynomial $p\in \mcP^n$ and its $n-1$ non-multiple periodic points $z_1, \dots, z_{n-1}$ belonging to \textit{different} periodic orbits of periods $m_1,\dots, m_{n-1}$ respectively. By $\bfm$ denote the vector of periods $\bfm=(m_1,\dots,m_{n-1})$. With any such polynomial and its non-multiple periodic points belonging to different periodic orbits, one can associate the set $M^n_\bfm$ defined in the following way:
\begin{definition}\label{Mnm_def}
The set $M^n_\bfm=M^n_\bfm(p,z_1,\dots,z_{n-1})$ is the maximal irreducible analytic subset of $\mcP^n\times\bbC^{n-1}$, such that

(i) $(p,z_1,\dots,z_{n-1})\in M^n_\bfm$;

(ii) For $(q,w_1,\dots,w_{n-1})\in M^n_\bfm$, the points $w_1,\dots,w_{n-1}$ satisfy equations $q^{\circ m_j}(w_j)=w_j$, for any $j=1,2,\dots,n-1$.
\end{definition}

The following Lemma plays an important role in the proof of Theorem~\ref{main_th}. The proof of this Lemma can be found in~\cite{mult_n}.
\begin{lemma}\label{Mnkm_lemma}
Assume that $n\ge 2$.  
Then the set $M^n_\bfm$ is completely determined by the integer $n$, and the vector $\bfm$. The set $M^n_\bfm$ can be described as the closure in $\mcP^n\times\bbC^{n-1}$ of the set of all points $(p,z_1,\dots,z_{n-1})\in\mcP^n\times\bbC^{n-1}$, where $p\in\mcP^n$ and all $z_j$ are non-multiple periodic points of $p$, belonging to different periodic orbits of corresponding periods $m_j$.
\end{lemma}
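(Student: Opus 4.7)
The plan is to define $N^n_\bfm$ as the locally closed subset of $\mcP^n \times \bbC^{n-1}$ consisting of tuples $(q, w_1, \dots, w_{n-1})$ such that each $w_j$ is a non-multiple periodic point of $q$ of exact period $m_j$ and the orbits of $w_1, \dots, w_{n-1}$ are pairwise disjoint. The goal is to identify $M^n_\bfm$ with the closure $\overline{N^n_\bfm}$. Both claims of the lemma — independence from the base point and the explicit closure description — will then follow once $N^n_\bfm$ is shown to be connected in the classical topology.

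The first step is a transversality observation. At any point of $N^n_\bfm$, the Jacobian of the system $\{q^{\circ m_j}(w_j) - w_j\}_{j=1}^{n-1}$ with respect to $(w_1, \dots, w_{n-1})$ is diagonal with entries $(q^{\circ m_j})'(w_j) - 1$, all nonzero by the non-multiple hypothesis. Hence $N^n_\bfm$ lies in the smooth locus of the algebraic variety
\[
V^n_\bfm := \{(q,\bfw) \in \mcP^n \times \bbC^{n-1} : q^{\circ m_j}(w_j) = w_j,\ j = 1,\dots,n-1\},
\]
the projection $\pi \colon N^n_\bfm \to \mcP^n$ is \'etale, and each connected component of $N^n_\bfm$ is contained in a unique irreducible component of $V^n_\bfm$.

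This reduces the lemma to the connectedness of $N^n_\bfm$. Indeed, let $C$ be the connected component of $N^n_\bfm$ containing the base point $(p, z_1, \dots, z_{n-1})$ and let $Y$ be the unique irreducible component of $V^n_\bfm$ containing $C$. Any irreducible analytic set satisfying (i) and (ii) lies in $V^n_\bfm$ and contains the smooth point $(p, \bfz)$, hence lies entirely in $Y$; conversely $Y$ itself is irreducible and satisfies (i)--(ii), so $Y = M^n_\bfm$ by maximality. If in addition $N^n_\bfm = C$ is connected, then $N^n_\bfm \cap Y$ is open and dense in $Y$ — its complement in $Y$ is cut out by the closed conditions ``$w_j$ has period strictly dividing $m_j$,'' ``some multiplier equals $1$,'' and ``two $w_j$ share an orbit'' — and consequently $Y = \overline{N^n_\bfm}$. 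As this description depends only on $n$ and $\bfm$, both conclusions of the lemma follow at once.

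The remaining and principal obstacle is the connectedness of $N^n_\bfm$. Over a Zariski-open subset of $\mcP^n$ the map $\pi$ is a finite \'etale cover, and connectedness of $N^n_\bfm$ is equivalent to transitivity of the monodromy action of $\pi_1(\mcP^n \setminus \Sigma)$ on the fiber, where $\Sigma$ is the relevant discriminant hypersurface. I would attack this by constructing, for each period $m_j$ separately, explicit loops in parameter space encircling components of $\Sigma$ along which two period-$m_j$ orbits collide, and using one-dimensional complex dynamics (control of parabolic bifurcations, together with continuity of the orbit portrait) to identify which pairs of orbits are swapped by the resulting monodromy. Combining these loops across the different periods, together with the tautological permutation action on points within a single orbit, should give the full transitive action. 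The preceding reduction is essentially formal; it is this monodromy step that is dynamical in nature and where the genuine work lies.
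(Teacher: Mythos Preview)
The paper does not prove this lemma; it simply cites the companion paper \cite{mult_n} for the argument. There is therefore no proof here against which to compare your proposal.

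Evaluated on its own, your reduction is sound. The transversality observation is correct, $N^n_\bfm$ is open in $V^n_\bfm$, and the argument that the irreducible component $Y$ through the base point coincides with $M^n_\bfm$ (by maximality, using that the base point is a smooth point of $V^n_\bfm$) is valid. You have also correctly identified that everything hinges on the connectedness of $N^n_\bfm$, equivalently on $N^n_\bfm$ lying in a single irreducible component of $V^n_\bfm$. What remains is exactly the part you flag as ``where the genuine work lies'': the monodromy computation. You outline a plausible strategy---loops around parabolic loci for each period separately, plus transitivity on points within an orbit---but you do not carry it out, and the independence of these loops across different periods (i.e., that a loop permuting period-$m_j$ orbits can be chosen to fix the period-$m_i$ orbits for $i\neq j$) is not obvious and needs an actual argument. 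As written, then, this is a correct and well-organized outline whose central step is left unproved.
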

\begin{remark}
In particular, since for every vector $\bfm=(m_1,\dots,m_{n-1})$ a generic polynomial has $n-1$ distinct periodic orbits of corresponding periods $m_1,\dots,m_{n-1}$, it follows from Lemma~\ref{Mnkm_lemma} that the set $M^n_\bfm$ is always non-empty.
\end{remark}

We define the multiplier map $\Lambda\colon M^n_\bfm\to\bbC^{n-1}$ that with every point $(p,z_1,\dots,z_{n-1})\in M^n_\bfm$ associates the vector of multipliers of periodic points $z_1,\dots, z_{n-1}$:
$$
\Lambda\colon (p,z_1,\dots,z_{n-1})\mapsto ((p^{\circ m_1}(z_1))', (p^{\circ m_2}(z_2))',\dots,(p^{\circ m_{n-1}}(z_{n-1}))').
$$

\begin{lemma}\label{exist_l}
For every $n\ge 2$ and every vector $\bfm\in\bbN^{n-1}$, the multiplier map $\Lambda$ is a local diffeomorphism at least at one point of $M^n_\bfm$.
\end{lemma}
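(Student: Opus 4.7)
The set $M^n_\bfm$ is an irreducible analytic subset of $\mcP^n\times\bbC^{n-1}$ of pure complex dimension $n-1$: it is cut out inside the $2(n-1)$-dimensional ambient space by the $n-1$ periodic-point equations $q^{\circ m_j}(w_j)=w_j$, and Lemma~\ref{Mnkm_lemma} exhibits an irreducible component of the expected dimension. Since the target $\bbC^{n-1}$ also has dimension $n-1$, the plan is to exhibit a single smooth point of $M^n_\bfm$ at which the differential $d\Lambda$ is surjective.

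To do this, I would first produce a \emph{maximally hyperbolic} polynomial $p_0\in\mcP^n$ for the given $\bfm$: one whose $n-1$ (simple) critical points each lie in the basin of a different attracting periodic cycle $O_j$, with $O_j$ of period $m_j$. Picking a point $z_j^0\in O_j$ for each $j$ gives a distinguished element $(p_0,z_1^0,\dots,z_{n-1}^0)\in M^n_\bfm$. Because every multiplier $(p_0^{\circ m_j})'(z_j^0)$ lies strictly inside $\bbD$, these periodic orbits persist holomorphically under perturbation, and the implicit function theorem provides a biholomorphism between a neighborhood $U$ of this point in $M^n_\bfm$ and a neighborhood $H\subset\mcP^n$ of $p_0$. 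Under this identification, $\Lambda|_U$ becomes the classical \emph{multiplier map} $H\to\bbD^{n-1}$ of the hyperbolic component~$H$, sending $p\mapsto(\lambda_1(p),\dots,\lambda_{n-1}(p))$, where $\lambda_j(p)$ is the multiplier of the attracting cycle of period $m_j$ capturing the $j$-th critical point.

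A standard fact in one-dimensional holomorphic dynamics (going back to Douady--Hubbard and Milnor) asserts that on such a maximally hyperbolic component the multiplier map is holomorphic, proper, and a finite branched covering of an open subset of $\bbD^{n-1}$. In particular it is a local biholomorphism away from a proper analytic branch locus. Picking any polynomial in $H$ outside this branch locus, together with the corresponding selection of periodic points, yields the desired point of $M^n_\bfm$ at which $\Lambda$ is a local diffeomorphism.

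The main obstacle is the existence of the maximally hyperbolic $p_0$ realizing an \emph{arbitrary} prescribed tuple $\bfm\in\bbN^{n-1}$. I would build it as a small perturbation of a post-critically finite model in which each critical point is strictly periodic with the prescribed period; the existence of such a model is a combinatorial realization problem, which can be handled either by Thurston's theorem for critically finite polynomials or by a direct quasiconformal surgery that grafts local super-attracting models of the required periods onto a base polynomial (for instance $z^n$) along suitable equipotentials in B\"ottcher coordinates. This geometric realization step is precisely where the paper's one-dimensional dynamical input replaces the ad hoc computational lemmas of \cite{mult_n}.
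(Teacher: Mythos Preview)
Your overall strategy coincides with the paper's: produce a polynomial $p_0\in\mcP^n$ with $n-1$ attracting cycles of the prescribed periods, and then exploit the structure of the multiplier map on the resulting hyperbolic component. Where you invoke a generic ``finite branched covering'' statement, the paper appeals to Milnor's explicit biholomorphism of a bounded hyperbolic component onto a product of Blaschke-model spaces (Theorem~\ref{attract_th} here), which gives directly that $\Lambda$ restricts to a bijection onto $\bbD^{n-1}$; either version suffices for the conclusion. The substantive divergence is in the \emph{existence} of $p_0$. Rather than Thurston rigidity or surgery, the paper argues by induction on the periods: the base case $\bfm=(1,\dots,1)$ comes from Zarhin's nondegeneracy result for the fixed-point multiplier map (Proposition~\ref{fixed_th}), and the inductive step replaces an attracting fixed point by an attracting $m_k'$-cycle via a parabolic bifurcation---push the fixed-point multiplier to $e^{2\pi i/m_k'}$ along the boundary of the current (bounded) hyperbolic component, then perturb and apply the Maximum Modulus Principle to the multiplier of the newborn cycle. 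This is considerably lighter than Thurston's theorem and stays within the toolkit already assembled for the lemma. Your Thurston suggestion is valid in principle (one must still build the topological model with $n-1$ periodic simple critical points and rule out Levy cycles), but the surgery proposal on $z^n$ is not workable as written: $z^n$ has a single critical point of multiplicity $n-1$, and there is no evident way to ``graft along equipotentials'' so as to split it into $n-1$ simple critical points with independently prescribed periods.
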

Lemma~\ref{exist_l} immediately follows from~\cite[Lemma~2.1]{mult_n}, but the goal of this paper is to prove Lemma~\ref{exist_l} using different techniques.

Assuming that Lemma~\ref{exist_l} is already proved, one can deduce the proof of Theorem~\ref{main_th} by the following simple observation:

\begin{proof}[Proof of Theorem~\ref{main_th}]
It follows from Definition~\ref{Mnm_def} that $M^n_\bfm$ is an irreducible algebraic set. The multiplier map $\Lambda$ is an algebraic map on $M^n_\bfm$, and since according to Lemma~\ref{exist_l}, $\Lambda$ is a local diffeomorphism at some point of $M^n_\bfm$, it is a local diffeomorphism everywhere outside of some codimension~$1$ subset of $M^n_\bfm$.

The multipliers of $n-1$ periodic orbits of periods $m_1,\dots,m_{n-1}$ considered as (multiple valued) functions on $\mcP^n$, can be obtained from the multiplier map $\Lambda\colon M^n_\bfm\to\bbC^{n-1}$ by precomposition with a suitable inverse branch $\pi^{-1}$ of the projection $\pi\colon M^n_\bfm\to\mcP^n$. Since $\pi^{-1}$ is a local diffeomorphism everywhere outside of some codimension~$1$ subset of $\mcP^n$, the composition $\Lambda\circ\pi^{-1}$ has a non-degenerate differential at least at one point of $\mcP^n$, which implies that the multipliers of considered periodic orbits are algebraically independent (multiple valued) functions on $\mcP^n$.
\end{proof}

\section{Proof of Lemma~\ref{exist_l} (Existence of a regular point for~$\Lambda$)}
In the case when $m_1=\dots =m_{n-1}=1$, Lemma~\ref{exist_l} follows from the proposition:

\begin{proposition}\label{fixed_th} 
If $n\ge 2$ and $\bfm=(1,\dots,1)$, then the map $\Lambda$ is a local diffeomorphism at every regular point of the projection $\pi\colon M^n_\bfm\to\mcP^n$.
\end{proposition}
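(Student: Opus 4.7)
The plan is to parametrize $\mcP^n$ locally by polynomial perturbations, rewrite $d\Lambda$ in those coordinates, and rule out a non-trivial kernel by studying critical points of an auxiliary rational function. Write $P := p - \mathrm{id}$, so the marked fixed points are simple zeros of $P$ with $P'(z_i) = \lambda_i - 1 \ne 0$. A tangent vector to $\mcP^n$ at $p$ is naturally represented by a polynomial $Q \in \bbC[z]$ of degree at most $n - 2$ (the centered condition removes the $z^{n-1}$ term); under $p \mapsto p + \epsilon Q$ the implicit function theorem moves $z_i$ to $z_i - \epsilon Q(z_i)/P'(z_i) + O(\epsilon^2)$, and differentiating $\lambda_i = p'(z_i)$ gives
\[
\left.\frac{d \lambda_i}{d\epsilon}\right|_{\epsilon = 0} = Q'(z_i) - \frac{P''(z_i)}{P'(z_i)}\,Q(z_i).
\]
Consequently $d\Lambda$ is non-degenerate if and only if the only $Q$ of degree $\le n - 2$ satisfying $Q'(z_i) P'(z_i) = Q(z_i) P''(z_i)$ for all $i = 1, \dots, n - 1$ is $Q = 0$.

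The key reformulation is geometric: setting $h := Q/P'$, a rational function on $\hat{\bbC}$, the kernel condition becomes $h'(z_i) = 0$, i.e., every marked fixed point is a critical point of $h$. I would rule out nontrivial $Q$ by a Riemann--Hurwitz analysis combined with the rigidity of the form $Q/P'$. Since $\deg P' = n - 1$ and $\deg Q \le n - 2$, the map $h \colon \hat{\bbC} \to \hat{\bbC}$ has degree at most $n - 1$, with a zero of order $n - 1 - \deg Q$ at $\infty$; its total ramification is $2n - 4$, the zero at $\infty$ absorbing $n - 2 - \deg Q$. Accommodating $n - 1$ further critical points at the $z_i$ forces $\deg Q \ge 1$ and leaves only $\deg Q - 1$ units of ramification free --- a very tight budget. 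The extra rigidity inputs --- that the poles of $h$ are zeros of $P'$ (disjoint from the zeros of $P$, namely the $z_i$) and that $P$ is centered --- should then provide the algebraic over-determination forcing $Q = 0$.

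The step I expect to be the main obstacle is closing this last deduction: the Riemann--Hurwitz count is tight but not in itself contradictory, and the specific shape of the denominator $P'$ (as opposed to an arbitrary polynomial of the same degree) must be genuinely exploited; the common-factor case $\gcd(Q,P') \ne 1$ requires a parallel accounting. A backup plan, should the geometric route become unwieldy, is to substitute the classical identity $P''(z_i)/P'(z_i) = 2 \sum_{k \ne i}1/(z_i - z_k)$, turning the kernel condition into an explicit $(n-1) \times (n-1)$ linear system in the coefficients of $Q$, and to prove non-vanishing of its determinant by induction on $n$ (degenerating $z_n \to \infty$) or by a matrix-tree evaluation. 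A direct calculation for $n = 3$ gives the determinant $-6(z_1 - z_2)^2$, a constant multiple of a non-vanishing discriminant-type polynomial, which strongly suggests a similar closed-form factorization in general.
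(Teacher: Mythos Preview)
Your differential computation is correct, and the reformulation of the kernel condition as $h'(z_i)=0$ for $h=Q/P'$ is clean. However, the proposal is not a proof: you yourself identify the crux---closing the Riemann--Hurwitz count---as an open step, and indeed the count you obtain ($\deg Q-1$ residual ramification units) is not a contradiction. The ``extra rigidity'' coming from the specific denominator $P'$ is asserted but not used in any concrete way, and the common-factor case is left untouched. The backup plan (explicit determinant, induction or matrix-tree) is likewise only a sketch supported by the single datum $n=3$. So as it stands there is a genuine gap: neither route is carried to a conclusion.

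The paper's proof is entirely different and much shorter. Rather than analyzing the $(n-1)\times(n-1)$ system directly, it observes (via the holomorphic fixed-point formula, \cite{Milnor}) that the multiplier at the remaining $n$-th fixed point is a function of the other $n-1$ multipliers; hence if $v$ kills $d\Lambda$, it also kills the differential of the map to \emph{all} $n$ fixed-point multipliers. That enlarged map is known to have injective differential by Zarhin's theorem \cite{Zarhin_2}, and the proof ends. In effect the paper outsources the hard linear-algebra step to Zarhin, using the index relation to bridge from $n-1$ to $n$ multipliers. Your approach, if completed, would be more self-contained---essentially an independent proof of the relevant case of Zarhin's result---but you would need either to find the missing geometric lever in the Riemann--Hurwitz argument or to actually evaluate the determinant. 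One hint toward the former: the index relation $\sum_{i=1}^n 1/P'(z_i)=0$ (residues of $1/P$) shows that the $n$-th point is not independent of the others, so incorporating $z_n$ into the critical-point count for $h$ costs nothing extra and tightens the budget by one; this is exactly the mechanism the paper exploits.
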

\begin{proof}
Assume, $(p,z_1,\dots,z_{n-1})\in M_\bfm^n$ is a regular point of the projection $\pi$. Let $\pi^{-1}$ be the inverse branch of $\pi$ in the neighborhood of $p$, such that $\pi^{-1}(p)=(p,z_1,\dots,z_{n-1})$, and define $\phi=\Lambda\circ\pi^{-1}$. Assume that the differential $d\phi_p$ vanishes at some vector $v$ from the tangent space $T_p\mcP^n$.

Let $z_n$ be the fixed point of the polynomial $p$ that is different from $z_1,\dots,z_{n-1}$. For all $q\in\mcP^n$ in a sufficiently small neighborhood of $p$, let $z_n(q)$ be the fixed point of $q$, obtained by analytic continuation of $z_n$ in that neighborhood. Define the map $\psi\colon\mcP^n\to\bbC^{n-1}\times\bbC$ by the formula
$$
\psi(q)=(\phi(q),q'(z_n(q))).
$$
According to~\cite[Section~12]{Milnor}, the multiplier $q'(z_n(q))$ can be expressed as a function of the other $n-1$ multipliers at the fixed points. Therefore, vector $v$ also lies in the kernel of the differential $d\psi_p$, but according to Theorem~1.1 from~\cite{Zarhin_2}, the kernel of $d\psi_p$ is trivial. Hence $v=0$, and $d\phi_p$ is non-degenerate.
\end{proof}

Let $\bbD$ be the open unit disk in $\bbC$. Another useful theorem formulated below, is a corollary of Theorem~5.1 from~\cite{Milnor_hyper} and concerns the case, when the preimage $\Lambda^{-1}(\bbD^{n-1})\subset M^n_\bfm$ is non-empty.
\begin{theorem}\label{attract_th}
Assume $n\ge 2$ and $\bfm\in\bbN^{n-1}$ is any vector of $n-1$ positive integers. 
If $\Lambda^{-1}(\bbD^{n-1})\subset M^n_\bfm$ is non-empty, then it consists of finitely many connected components, and $\Lambda$ is a bijection between each of these connected components and $\bbD^{n-1}$.
\end{theorem}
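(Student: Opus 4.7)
The plan is to realize $\Lambda^{-1}(\bbD^{n-1})$ as a finite collection of unramified coverings of hyperbolic components of $\mcP^n$, and then transfer the biholomorphism granted by Milnor's Theorem~5.1 from $\mcP^n$ up to $M^n_\bfm$.

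First, I would interpret the preimage dynamically: a point $(p, z_1, \dots, z_{n-1}) \in \Lambda^{-1}(\bbD^{n-1})$ means precisely that $p$ carries $n-1$ distinct attracting periodic orbits of periods $m_1, \dots, m_{n-1}$. Since a polynomial of degree $n$ has exactly $n-1$ critical points counted with multiplicity, and since each attracting cycle must capture at least one critical orbit (by Fatou's theorem), these $n-1$ marked cycles account for \emph{all} attracting dynamics of $p$, with each critical point lying in the basin of exactly one of them. In particular, $p$ is a hyperbolic polynomial of the ``disjoint'' type to which Milnor's Theorem~5.1 applies.

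Next, I would partition $\Lambda^{-1}(\bbD^{n-1})$ according to the hyperbolic component of $\mcP^n$ it projects into. Let $U$ be a connected component of $\Lambda^{-1}(\bbD^{n-1})$; then $\pi(U)$ is contained in a single hyperbolic component $H\subset\mcP^n$ of the type above, because the combinatorial critical portrait is locally constant on any hyperbolic component. Milnor's Theorem~5.1 provides a biholomorphism $\mu_H\colon H \to \bbD^{n-1}$ given by the multipliers of the $n-1$ attracting cycles of $q \in H$. Meanwhile, the projection $\pi|_U\colon U \to H$ is a bijection, since on $U$ the marked periodic points $z_j(q)$ are holomorphic branches uniquely determined by continuity from a base point: each $z_j$ must stay in one specific attracting cycle of $q$, and distinct cycles stay disjoint. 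The composition $\Lambda|_U = \mu_H \circ \pi|_U$ is therefore a biholomorphism from $U$ onto $\bbD^{n-1}$.

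For the finiteness assertion, I would combine three finite counts: there are finitely many combinatorial critical portraits compatible with the period vector $\bfm$; by Milnor's classification each such portrait is realized by only finitely many hyperbolic components of $\mcP^n$; and above each such component $H$, the fiber of $\pi|_{\Lambda^{-1}(\bbD^{n-1})}$ has at most $m_1 m_2 \cdots m_{n-1}$ sheets, one for each choice of a base point within each of the $n-1$ cycles. The main obstacle I expect is the careful translation between the combinatorial framework of Milnor's Theorem~5.1 (phrased in terms of hyperbolic components and their parameterization by multiplier coordinates) and the setup here in $M^n_\bfm$ (where the periodic orbits are tracked, not the critical portrait); once this translation and the identification of $H$ as a ``maximal disjoint'' hyperbolic component are in place, the proof reduces to a rearrangement of Milnor's result.
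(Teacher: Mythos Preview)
Your outline is essentially the paper's argument: use Fatou--Shishikura to see that any $(p,z_1,\dots,z_{n-1})\in\Lambda^{-1}(\bbD^{n-1})$ has $p$ hyperbolic, then invoke Milnor's Theorem~5.1 to parameterize the hyperbolic component $H$ and lift the parameterization through a branch of $\pi^{-1}$. Two points of divergence are worth flagging. First, Milnor's Theorem~5.1 as cited does \emph{not} directly hand you the multiplier biholomorphism $\mu_H\colon H\to\bbD^{n-1}$; it gives a diffeomorphism $\phi\colon H\to\mcB^{n-1}$ onto a product of Blaschke-model spaces $\mu_a(z)=\tfrac{1-\overline a}{1-a}\cdot\tfrac{z^2-az}{1-\overline a z}$, and the paper then verifies by an explicit M\"obius computation that $a\mapsto\mu_a'(0)$ is a bijection of $\bbD$ to itself --- this is the one genuine calculation in the proof, and you have silently folded it into the citation. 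Second, for finiteness the paper simply observes that $\pi\colon M^n_\bfm\to\mcP^n$ is a finite-to-one map (so each component of $\Lambda^{-1}(\bbD^{n-1})$, being a sheet of $\pi$ over some $H$ and containing exactly one point of the algebraic set $\Lambda^{-1}(0)$, is one of finitely many); this is shorter than your enumeration of critical portraits, hyperbolic components per portrait, and $m_1\cdots m_{n-1}$ sheets, though either route works.
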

\begin{proof}
Assume that $\Lambda^{-1}(\bbD^{n-1})$ is non-empty and $p\in \mcP^n$ is a polynomial with $n-1$ attracting periodic orbits of periods $m_1,\dots, m_{n-1}$. According to the Fatou-Shishikura Inequality, for each of these attracting periodic orbits there exists exactly one critical point of $p$ which converges to that orbit. Since there are no more than $n-1$ critical points, this implies that the polynomial $p$ is hyperbolic and belongs to a bounded hyperbolic component $H\subset\mcP^n$.

For each $a\in\bbD$, consider a map $\mu_a\colon\bbD\to\bbD$ given by the formula
$$
\mu_a(z)=\frac{1-\overline a}{1-a}\cdot\frac{z^2-az}{1-\overline a z}.
$$
Let $\mcB$ denote the space of all such maps $\mu_a$.
Then according to Theorem~5.1 from~\cite{Milnor_hyper}, there exists a diffeomorphism $\phi\colon H\to\mcB^{n-1}$ which sends a polynomial $q\in H$ to an element $\phi(q)=(\mu_{a_1},\dots,\mu_{a_{n-1}})\in\mcB^{n-1}$ with the following property: let $\mcU_1,\dots,\mcU_{n-1}\subset\bbC$ be the periodic Fatou components of $q$ of corresponding periods $m_1,\dots,m_{n-1}$, such that each $\mcU_j$ contains a critical point of $q$. Then for each $j=1,\dots,n-1$, the restriction $\left. q^{\circ m_j}\right|_{\mcU_j}\colon\mcU_j\to\mcU_j$ is conformally conjugate to $\mu_{a_j}$.

It is easy to see that for each $a\in\bbD$, zero is a fixed point of $\mu_a$. Now we check that the map $a\mapsto\mu_a'(0)$ is a bijection of $\bbD$ to itself. Indeed,
\begin{equation}\label{mua_eq1}
\mu_a'(0)=-a\cdot\frac{1-\overline a}{1-a}=\frac{|a|^2-a}{1-a},
\end{equation}
and since $\left|\frac{1-\overline a}{1-a}\right|=1$, we have $|\mu_a'(0)|=|a|$, and~(\ref{mua_eq1}) is equivalent to
$$
a=\frac{|\mu_a'(0)|^2-\mu_a'(0)}{1-\mu_a'(0)}.
$$
The above observations imply that, 
if the map $\psi\colon\mcB^{n-1}\to\bbD^{n-1}$ is defined by the relation $\psi\colon (\mu_{a_1},\dots,\mu_{a_{n-1}})\mapsto(\mu_{a_1}'(0),\dots,\mu_{a_{n-1}}'(0))$, then the composition $\psi\circ\phi$ is a bijection between $H$ and $\bbD^{n-1}$. Since multipliers of periodic orbits are conformal invariants, the numbers $\mu_{a_1}'(0),\dots,\mu_{a_{n-1}}'(0)$ are multipliers of the corresponding attracting periodic orbits of $q$. Thus, for a suitably chosen inverse branch $\pi^{-1}$ of the projection $\pi\colon M^n_\bfm\to\mcP^n$, we have the following identity on $H$:
$$
\psi\circ\phi=\Lambda\circ\pi^{-1}.
$$
In particular, this implies that $\Lambda=\psi\circ\phi\circ\pi$ is a bijection between $\pi^{-1}(H)$ and $\bbD^{n-1}$. Since $\pi$ is a finite-to-one map, the preimage $\Lambda^{-1}(\bbD^{n-1})$ can have only finitely many connected components.
\end{proof}

The following lemma shows that the preimage $\Lambda^{-1}(\bbD^{n-1})\subset M^n_\bfm$ considered in Theorem~\ref{attract_th}, is always non-empty.
\begin{lemma}\label{exist_attr_l}
Given $n\ge 2$ and a vector $\bfm=(m_1,\dots,m_{n-1})\in\bbN^{n-1}$, the space $\mcP^n$ contains a polynomial with $n-1$ distinct attracting periodic orbits of corresponding periods $m_1,\dots,m_{n-1}$.
\end{lemma}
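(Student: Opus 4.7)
I aim to prove the stronger statement that, for any $\bfm = (m_1, \dots, m_{n-1})$, there exists a post-critically finite polynomial $p \in \mcP^n$ whose $n-1$ critical points $c_1, \dots, c_{n-1}$ form $n-1$ pairwise disjoint super-attracting cycles of exact periods $m_1, \dots, m_{n-1}$. Any such $p$ has all cycle multipliers equal to $0\in\bbD$, so $(p,c_1,\dots,c_{n-1})\in\Lambda^{-1}(\{0\}^{n-1})\subset\Lambda^{-1}(\bbD^{n-1})$, which establishes the lemma.

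The first step is to build a topological model: a continuous branched covering $F\colon S^2\to S^2$ of degree $n$ with $\infty$ a fixed critical point of local degree $n$, with $n-1$ further simple critical points $c_1,\dots,c_{n-1}\in\bbC$, and with each $c_j$ periodic under $F$ of exact period $m_j$ on pairwise disjoint cycles. The Riemann--Hurwitz count $(n-1)+(n-1)=2n-2$ is saturated by this data, so no extra critical points are needed, and the classical Hurwitz realization theorem for branched covers of $S^2$ with prescribed ramification produces such an $F$.

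The second step is to realize $F$ as an honest polynomial via Thurston's characterization theorem. Every post-critical point of $F$ becomes a puncture in the Thurston orbifold $O_F$ (the local degrees of $F^{\circ k}$ accumulate without bound at iterated preimages), giving $\chi(O_F)=1-\sum_j m_j$, which is strictly negative for $n\ge 3$; hence $O_F$ is hyperbolic. For portraits whose critical orbits are strictly periodic---the \emph{hyperbolic} case---no Thurston obstruction can occur (a classical fact, due to Bielefeld--Fisher--Hubbard and Poirier for polynomials). Therefore $F$ is Thurston-equivalent to a rational map, which since $\infty$ is fixed of full local degree $n$ is automatically a polynomial of degree $n$; an affine conjugation places it in $\mcP^n$. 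The remaining case $n=2$ is handled directly by the classical Mandelbrot-set theory, which provides a hyperbolic component of every period.

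The main technical obstacle is the no-obstruction verification in the second step: the abstractly constructed $F$ carries no intrinsic dynamical control, and one must rule out any $F$-invariant multicurve of non-expanding type. A self-contained alternative that avoids Thurston's theorem is quasiconformal surgery: take centers $q_j(z)=z^2+c_j$ of period-$m_j$ hyperbolic components of the Mandelbrot set, copy the dynamics of each $q_j$ onto pairwise disjoint topological disks in $\bbC$, interpolate quasiregularly to a global degree-$n$ branched cover fixing $\infty$ with local degree $n$ (the Riemann--Hurwitz balance again forces no extra critical points), and straighten the result by the measurable Riemann mapping theorem to obtain a polynomial with the prescribed critical-orbit portrait.
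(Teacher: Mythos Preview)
Your approach is correct and genuinely different from the paper's. The paper argues inductively inside parameter space: Proposition~\ref{fixed_th} (via Zarhin) gives the base case $\bfm=(1,\dots,1)$, and then, using the parametrization of Theorem~\ref{attract_th}, one fixed-point multiplier is pushed to $e^{2\pi i/m_k'}$ on the boundary of the hyperbolic component, where a parabolic bifurcation creates a period-$m_k'$ cycle that the Maximum Modulus Principle forces to be attracting for some nearby map; iterating raises the periods one coordinate at a time. You instead construct the desired polynomial in one shot via Thurston realization (or surgery). Two points of precision are worth noting. First, ``Hurwitz realization'' is not quite the right tool for your step one, since you need a self-map with prescribed \emph{dynamics} on a finite marked set, not merely prescribed ramification data; the topological model is nonetheless easily built by taking any generic degree-$n$ polynomial and pre- and post-composing with orientation-preserving homeomorphisms of $S^2$ that place finitely many points at prescribed locations. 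Second, Bielefeld--Fisher--Hubbard treats the strictly \emph{preperiodic} case, so for periodic critical portraits you should either cite Poirier's Hubbard-tree realization directly or give the one-line Levy-cycle argument (every cycle in $P_F\setminus\{\infty\}$ contains a critical point, so no disk in a Levy cycle can map forward by degree one). The trade-off: your route imports heavier machinery (Thurston's theorem or a carefully executed surgery) but yields the stronger conclusion that each such hyperbolic component has a post-critically finite center; the paper's route is more elementary and stays within continuity arguments, at the cost of depending on Zarhin's theorem for the base case and being less direct.
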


Now we can give a proof of Lemma~\ref{exist_l} modulo the proof of Lemma~\ref{exist_attr_l}.
\begin{proof}[Proof of Lemma~\ref{exist_l}]
Given $n\ge 2$ and a vector $\bfm\in\bbN^{n-1}$, it follows from Lemma~\ref{exist_attr_l} and Lemma~\ref{Mnkm_lemma} that there exists a point $\alpha\in M^n_\bfm$, such that $\Lambda(\alpha)\in\bbD^{n-1}$. Thus, the set $\Lambda^{-1}(\bbD^{n-1})$ is non-empty, and Theorem~\ref{attract_th} implies that $\Lambda$ bijectively maps a domain in $M^n_\bfm$ onto $\bbD^{n-1}$. Now Lemma~\ref{exist_l} follows from the fact that $\Lambda$ is an algebraic map and $\dim (M^n_\bfm)=\dim (\bbD^{n-1})=n-1$.
\end{proof}

\begin{proof}[Proof of Lemma~\ref{exist_attr_l}]
First consider the case when $m_1=\dots=m_{n-1}=1$. According to Proposition~\ref{fixed_th}, $\Lambda$ is a local diffeomorphism at least at one point of $M^n_\bfm$, and since $\Lambda$ is an algebraic map, it takes all values from $\bbC^{n-1}$ except possibly a subset of complex codimension~$1$. In particular, this means that there exists a polynomial $p\in\mcP^n$ with $n-1$ distinct attracting fixed points.

Now we will show that if there exists a polynomial $p\in\mcP^n$ with $n-1$ attracting periodic orbits of periods $m_1,\dots,m_{n-1}$ and $m_k=1$ for some $k\in\{1,\dots,n-1\}$, then there exists another polynomial $p'\in\mcP^n$ whose $n-1$ attracting periodic orbits have periods $m_1',\dots,m_{n-1}'$, where $m_j'=m_j$, for all $j\neq k$, and $m_k'$ is an arbitrary positive integer. Together with the case $m_1=\dots=m_{n-1}=1$ considered in the previous paragraph, this will prove Lemma~\ref{exist_attr_l}.

According to the Fatou-Shishikura Inequality, each critical point of the polynomial $p$ converges to a finite attracting periodic orbit, hence $p$ belongs to the connectedness locus and in particular, to a bounded hyperbolic component $H\subset\mcP^n$. 
Now according to Theorem~\ref{attract_th}, we can consider a sequence of polynomials $\{p_r\}\subset H$, $r=1, 2,\dots$, such that for all $p_r$, the $n-2$ attracting multipliers that correspond to the periodic orbits of periods $m_j$, $j\in\{1,\dots,n-1\}\setminus\{k\}$, are fixed, while the multiplier of the attracting fixed point -- the periodic orbit of period $m_k=1$, approaches the value $\exp(2\pi i/m_k')$. Since $H$ is a bounded set, it is precompact and there exists a subsequence of the sequence $\{p_r\}$ which converges to some polynomial $p_0\in\overline H$. By continuity, $p_0$ has $n-2$ attracting periodic orbits of corresponding periods $m_j$, $j\in\{1,\dots,n-1\}\setminus\{k\}$ and a fixed point $z_0$ with multiplier $\exp(2\pi i/m_k')$. A generic perturbation $q$ of $p_0$ generates a periodic orbit of period $m_k'$ from the fixed point $z_0$. Let $\lambda(q)$ be the multiplier of that periodic orbit. By analytic continuation both the periodic orbit and its multiplier can be extended to algebraic (possibly multiple valued) functions in a neighborhood of $p_0$.

It follows from the Fatou-Shishikura Inequality that $|\lambda(q)|>1$, for $q\in H$, so since $\lambda(p_0)=1$, $\lambda(q)$ is not a constant function. Therefore, according to the Maximum Modulus Principle, $|\lambda(p')|<1$ for some $p'\in\mcP^n$ arbitrarily close to $p_0$. We can choose $p'$ sufficiently close to $p_0$, so that all other attracting periodic orbits would remain attracting. This finishes the proof of Lemma~\ref{exist_attr_l}.
\end{proof}

\bibliography{mult}
\bibliographystyle{abbrv} 

\textsc{Department of Mathematics, University of Toronto, Room 6290, 40~St. George Street, Toronto, Ontario, Canada M5S~2E4}

\emph{E-mail address}: igors.gorbovickis@utoronto.ca

\end{document}